\def \Z {\mathbb{Z}}
\def \A {{\mathbb A}}
\def \L {{\cal L}}
\def \GCSL {{\sf GCSL}}
\def \GCSG {{\sf GCSG}}
\def \NCA {{\sf NCA}}
\def\qed{\hfill $\Box$}
\newtheorem{theorem}{Theorem}
\newtheorem{proposition}[theorem]{Proposition}
\newtheorem{lemma}[theorem]{Lemma}
\newtheorem{corollary}[theorem]{Corollary}
\newenvironment{proof}{\normalsize {\sc Proof}:}{{\hfill $\Box$}}
\newenvironment{proofof}[1]{\normalsize {\sc Proof of #1}:}{{\hfill $\Box$}}
\title{Groups that do and do not have growing context-sensitive word
problem}
\author{Derek F. Holt, Sarah Rees and Michael Shapiro}
\date{\today}
\begin{document}

\maketitle

\begin{abstract}
We prove that a group has word problem that is a growing
context-sensitive language precisely if its word problem can be solved
using a non-deterministic Cannon's algorithm (the deterministic
algorithms being defined by Goodman and Shapiro in
\cite{GoodmanShapiro}). We generalise results of \cite{GoodmanShapiro}
to find many examples of groups not admitting non-deterministic
Cannon's algorithms.  This adds to the examples of
Kambites and Otto in~\cite{KambitesOtto} of groups
separating context-sensitive and growing context-sensitive word problems,
and provides a new language-theoretic separation result.
\end{abstract}

\section{Introduction}
The purpose of this note is to extend the results in
Sections 6 and 7 of~\cite{GoodmanShapiro}.  That article
described a linear time algorithm, which we call {\it Cannon's
algorithm}, which generalised Dehn's algorithm for solving the word
problem of a word-hyperbolic group. Many examples of groups that possess
such an algorithm were provided, alongside proofs that
various other groups do not.

The Cannon's algorithms described in ~\cite{GoodmanShapiro} are deterministic,
but there was some brief discussion at the end of Section 1.3
of~\cite{GoodmanShapiro}
of non-deterministic generalisations, and the close connections between
groups with non-deterministic Cannon's algorithms and those with
{\em growing context-sensitive} word problem.
(Non-deterministic Cannon's algorithms and growing context-sensitive
languages are defined in Section~\ref{sec:gcs} below.)
It was clear that an appropriate modification of the theorems in Section 7
of~\cite{GoodmanShapiro} should imply that various
groups, including direct products $F_m \times F_n$ of free groups
with $m>1$ and $n \ge 1$ and other examples mentioned below,
had word problems that were context-sensitive but not growing
context-sensitive.  We provide that modification in this paper.

Our examples are not the first to separate context-sensitive and
growing context-sensitive word problem. For
$F_m \times F_n$ with $m,n>1$ has recently been proved context-sensitive but
not growing context-sensitive by Kambites and Otto
in~\cite{KambitesOtto}, using somewhat different methods;
they failed to resolve this question for $F_m \times F_1
= F_m \times \Z$ with $m>1$, which is now covered by our result. 
And our result has further application, since,
according to~\cite[Section~7]{KambitesOtto}, the fact that
$F_2 \times F_1$ does not have growing context sensitive word problem
implies that the class of growing context-sensitive languages is properly
contained in the language class $\L(\mathsf{OW\!\!-\!\!auxPDA}(poly,log))$.
We refer the reader to~\cite{KambitesOtto} for a definition of this class,
and for citations.

In Section~\ref{sec:gcs} of this article, we provide definitions of the
classes of context-sensitive and growing context-sensitive languages, and of
non-deterministic Cannon's algorithms. 
We then show in Theorem \ref{thm:gcs_can} that the set of formal languages
defined by non-deterministic Cannon's
algorithms is the same as the set of growing context-sensitive languages
that contain the empty word. Hence the word problem for a group can be
solved using a non-deterministic Cannon's algorithm precisely if
it is a growing context-sensitive language.

Two different versions of Cannon's algorithms, known as
{\em incremental} and {\em non-incremental}, are defined
in~\cite{GoodmanShapiro}, and both of these are deterministic.
In \cite[Proposition 2.3]{GoodmanShapiro} it is shown that the language of an
incremental Cannon's algorithm is also the language of a
non-incremental Cannon's algorithm.  In \cite[Theorem 3.7]{KambitesOtto}, it is
shown that the class of languages defined by
non-incremental Cannon's algorithms is exactly the class of
Church-Rosser languages.  It is pointed out in~\cite{KambitesOtto}
(following Theorems 2.5 and 2.6)
that the class of Church-Rosser languages is contained in the class of
growing context-sensitive languages.  It follows that every group with
an incremental or non-incremental Cannon's algorithm also has a
non-deterministic Cannon's algorithm. (This is not obvious, because
replacing a deterministic algorithm with a non-deterministic algorithm
with the same reduction rules could conceivably result in extra words
reducing to the empty word.) In particular, all of the examples
shown in~\cite{GoodmanShapiro} to have (incremental) Cannon's algorithms
have growing context-sensitive word problem.

It follows from the (known) fact that the class of growing context-sensitive
languages is closed under inverse homomorphism that the property of a finitely
generated group having growing context-sensitive word problem does not depend
on the choice of finite semigroup generating set of $G$. Some other closure
properties of the class of growing context-sensitive groups are mentioned
at the end of Section~\ref{sec:gcs}.

In Section~\ref{sec:nondet}, we generalise Theorems~7.1 and~7.2
of~\cite{GoodmanShapiro} to prove that there does not exist a
non-deterministic Cannon's algorithm for a group satisfying the hypotheses of
those theorems; that is, we prove the following.
\begin{theorem}
\label{thm:nondet1}
Let $G$ be a group that is generated as a semigroup by the finite set
$\mathcal{G}$, and suppose that, for each $n \ge 0$, there are sets
$S_1(n), S_2(n)$ in $G$ satisfying
\begin{enumerate}
\item[(1)] each element of $S_i(n)$ can be represented by a word over
$\mathcal{G}$ of length at most $n$,
\item[(2)]
there are constants $\alpha_0>0,\alpha_1>1$ such that
for infinitely many $n$, $|S_i(n)| \geq \alpha_0\alpha_1^n$, and
\item[(3)]
each element of $S_1(n)$ commutes with each element of $S_2(n)$.
\end{enumerate}
Then $G$ cannot have a non-deterministic Cannon's algorithm
over any finite semigroup generating set.
\end{theorem}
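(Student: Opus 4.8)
The plan is to argue by contradiction, via the reformulation of Theorem~\ref{thm:gcs_can}. Since the class of growing context-sensitive languages is closed under inverse homomorphism, possession of a non-deterministic Cannon's algorithm does not depend on the chosen finite semigroup generating set, so we may take $\mathcal{G}$ to be symmetric, i.e.\ closed under inversion; then for a word $w$ over $\mathcal{G}$ the formal inverse $\overline{w}$ is again a word over $\mathcal{G}$ with $|\overline{w}|=|w|$. Assume $G$ has a non-deterministic Cannon's algorithm over $\mathcal{G}$: concretely, a finite, strictly length-reducing string-rewriting system $R$ over $\mathcal{G}$, each rule $\ell\to r$ of which satisfies $\ell=_G r$, and with the property that $w=_G 1$ if and only if $w\to^*_R\varepsilon$. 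Let $C$ bound the lengths of the left-hand sides of $R$. This is the setting of the non-existence results of \cite[Section~7]{GoodmanShapiro}, with one new feature: there $R$ was deterministic, so one could track \emph{the} reduction of a word, whereas here we control only one, adversarially chosen, reduction of each word, and the argument must be made robust to that choice.

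First, trim the families to exponential subfamilies with long geodesics. There are at most $|\mathcal{G}|^{k+1}$ words over $\mathcal{G}$ of length at most $k$, hence at most that many group elements of geodesic length at most $k$. Choose $\beta>0$ with $|\mathcal{G}|^{\beta}<\alpha_1$ (possible as $\alpha_1>1$). Then, for every sufficiently large $n$ among the infinitely many $n$ supplied by hypothesis~(2), fewer than $\frac{\alpha_0}{2}\alpha_1^{n}$ elements of $S_i(n)$ have geodesic length at most $\beta n$, so the subfamily $S_i'(n)$ of elements of geodesic length in $(\beta n,n]$ satisfies $|S_i'(n)|\ge\frac{\alpha_0}{2}\alpha_1^{n}$ for $i=1,2$. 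For $u\in S_1'(n)$ fix a geodesic word $p_u$, and for $v\in S_2'(n)$ a geodesic word $q_v$; each has length in $(\beta n,n]$. We shall use the elementary observation that \emph{no rule of $R$ applies inside a geodesic word}: if $\ell\to r$ is a rule and $x\ell y$ is geodesic, then $xry=_G x\ell y$ while $|xry|<|x\ell y|$, contradicting geodesy. So rules of $R$ become applicable only where distinct geodesic pieces meet.

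Fix such an $n$, take $u\in S_1'(n)$ and $v\in S_2'(n)$, and invoke hypothesis~(3): as $u$ and $v$ commute, the word
\[
W_{u,v}\ :=\ p_u\,q_v\,\overline{p_u}\,\overline{q_v}
\]
has length at most $4n$, represents $1$, and so admits a reduction $W_{u,v}=W^{(0)}\to W^{(1)}\to\cdots\to W^{(T)}=\varepsilon$ in $R$ with $T\le 4n$, each step strictly shortening. Now $W_{u,v}$ is a concatenation of four geodesic blocks $p_u$, $q_v$, $\overline{p_u}$, $\overline{q_v}$, each of length greater than $\beta n$, hence greater than $C$ once $n$ is large; so every rule applied to $W^{(0)}$ — and more generally every rule applied before some block has been eroded below length $C$ — straddles exactly one of the three internal block-boundaries and can see only two adjacent blocks at once.

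Converting this locality into a counting contradiction, by adapting \cite[Section~7]{GoodmanShapiro}, is the heart of the proof and the step I expect to be the main obstacle. The mechanism is that the two $p_u$-type blocks are separated by the long block $q_v$, and the two $q_v$-type blocks by $\overline{p_u}$, so the reduction cannot cancel block~$1$ against block~$3$, or block~$2$ against block~$4$, ``at a distance'': some block must first shrink to bounded length, yet a geodesic block cannot shorten on its own, so it can only shrink through interaction with a neighbour, and iterating this forces the reduction, at some stage, into a configuration in which a factor whose length is bounded in terms of $R$ and $C$ alone already determines $u$ and $v$ up to at most polynomial-in-$n$ ambiguity (e.g.\ from having to record where in the word that factor sits). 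When $G=F_m\times F_n$ the neighbouring blocks lie in complementary free factors and interact only by honest free cancellation, which makes this transparent and recovers Theorems~7.1 and~7.2 of~\cite{GoodmanShapiro}; under the present hypotheses neighbouring blocks may instead slide past one another, since $\overline{p_u}\,\overline{q_v}=_G\overline{q_v}\,\overline{p_u}$, and bounding this sliding while still extracting a short witness is the technical crux — and also the point at which the non-determinism of $R$ has to be handled, since only one reduction of $W_{u,v}$ is under our control. Granting this, the number of pairs $(u,v)\in S_1'(n)\times S_2'(n)$ is at most a polynomial in $n$; but it is at least $\left(\frac{\alpha_0}{2}\right)^{2}\alpha_1^{2n}$, which grows faster than any polynomial. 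This contradiction shows that no such $R$ exists, and hence, by the generating-set independence noted at the start, $G$ has no non-deterministic Cannon's algorithm over any finite semigroup generating set.
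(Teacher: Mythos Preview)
Your proposal rests on a mischaracterisation of what a non-deterministic Cannon's algorithm is. You assume that the rewriting system $R$ is over $\mathcal{G}$ itself and that every rule $\ell\to r$ satisfies $\ell=_G r$. Neither is part of the definition: an \NCA\ works over a larger alphabet $\mathbb{A}\supseteq\mathcal{G}$ whose extra symbols carry no group-theoretic meaning, and the only global constraint is that the set of words over $\mathcal{G}$ reducing to $\varepsilon$ coincides with the word problem. Individual rules need not respect the group at all. This demolishes your key observation that ``no rule of $R$ applies inside a geodesic word'': a rule may very well replace a geodesic subword by a shorter string of auxiliary symbols, and after a single such step the word is no longer over $\mathcal{G}$, so geodesy is no longer even defined. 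The locality picture of rules straddling block boundaries, and the subsequent plan to extract a short witness determining $(u,v)$ up to polynomial ambiguity, therefore has no foundation. You also candidly flag the ``technical crux'' as unfinished even under your assumptions, so the argument is doubly incomplete.

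The paper's proof proceeds quite differently and never appeals to any group-theoretic content of individual rules. It follows the diagram and splitting-path machinery of \cite[\S6]{GoodmanShapiro}: one fixes, for each commutator $u_0v_0u_0^{-1}v_0^{-1}$, \emph{some} reduction to $\varepsilon$, lays it out as a diagram, and associates to it combinatorial data (splitting paths with their details, and an equivalence relation on histories adapted to the non-deterministic setting). Lemmas~6.7 and~6.8 of \cite{GoodmanShapiro} give polynomial bounds on the number of equivalence classes of this data, while hypothesis~(2) supplies exponentially many commutators. Pigeonhole then produces two commutators whose reductions share the same splitting data, and Lemma~6.6 lets one \emph{splice} the left half of one reduction onto the right half of the other, yielding a valid reduction of a word of the form $u_0v_0u_0'^{-1}v_0^{-1}$ (with $u_0\neq u_0'$) to $\varepsilon$. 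That word does not represent $1$, contradicting correctness of the \NCA. The point is that the contradiction is purely combinatorial and survives the presence of auxiliary symbols and non-group-respecting rules; your geodesic-block approach cannot.
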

\begin{theorem}
\label{thm:nondet2}
Let $G$ be a group that is generated as a semigroup by the finite set
$\mathcal{G}$, and suppose that, for each $n \ge 0$, there are sets
$S_1(n), S_2(n)$ in $G$ satisfying
\begin{enumerate}
\item[(1)] each element of $S_i(n)$ can be represented by a word over
$\mathcal{G}$ of length at most $n$,
\item[(2)]
there are constants $\alpha_0>0,\alpha_1>1,\alpha_2 >0$ such that
for all sufficiently large $n$, $|S_1(n)| \geq \alpha_0\alpha_1^n$ and
$|S_2(n)| \geq \alpha_2n$, and
\item[(3)]
each element of $S_1(n)$ commutes with each element of $S_2(n)$.
\end{enumerate}
Then $G$ cannot have a non-deterministic Cannon's algorithm
over any finite semigroup generating set.
\end{theorem}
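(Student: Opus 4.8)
The plan is to recast the statement as one about growing context-sensitive languages and then to adapt the argument of Section~7 of~\cite{GoodmanShapiro} to the non-deterministic setting. By Theorem~\ref{thm:gcs_can}, a group has a non-deterministic Cannon's algorithm over a given finite semigroup generating set exactly when its word problem over that set is a growing context-sensitive language (which automatically contains the empty word), and, as recalled in the introduction, this last property is independent of the generating set. So it is enough to assume that $W := W(G,\mathcal{G})$ is growing context-sensitive, fix a non-deterministic Cannon's algorithm $R$ for it, and derive a contradiction. Thus $R$ is a finite string-rewriting system over an alphabet $\Gamma \supseteq \mathcal{G}$, with a bound $k$ on the lengths of the left-hand sides of its rules, each rule replacing a factor by a strictly shorter (possibly empty) one, and such that a word $w$ over $\mathcal{G}$ lies in $W$ precisely when $w$ can be rewritten under $R$ to the empty word, via a reduction of at most $|w|$ steps. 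This $R$ is the non-deterministic analogue of the deterministic Cannon's algorithms ruled out in~\cite[Theorems 7.1 and 7.2]{GoodmanShapiro}; the genuinely new feature is that $R$ need not be confluent, so there is no canonical reduction of a given word to reason about.

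Next I fix the test words, as in Section~7 of~\cite{GoodmanShapiro}. Choose a large $n$ furnished by the hypotheses --- one of the infinitely many good values of $n$ for Theorem~\ref{thm:nondet1}, any sufficiently large value for Theorem~\ref{thm:nondet2}. For each $g \in S_1(n)$ pick a word $u_g$ over $\mathcal{G}$ of length at most $n$ representing $g$, and let $\bar u_g$ be the word over $\mathcal{G}$, of length at most $Kn$ (with $K$ depending only on $\mathcal{G}$), obtained from $u_g$ by reversing it and replacing each letter by a fixed $\mathcal{G}$-word for its inverse, so that $\bar u_g$ represents $g^{-1}$; choose $v_h, \bar v_h$ similarly for $h \in S_2(n)$, and set $w_{g,h} := u_g v_h \bar u_g \bar v_h$. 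By hypothesis~(3) this word represents $ghg^{-1}h^{-1} = 1$ in $G$, so $w_{g,h} \in W$, while $|w_{g,h}| \le (2K+2)n$. After deleting the at most one $g$ with $g=1$ and the at most one $h$ with $h=1$, at least $(|S_1(n)|-1)(|S_2(n)|-1)$ of these words remain, and for each we fix once and for all an $R$-reduction $\rho_{g,h}$ of $w_{g,h}$ to the empty word, necessarily of length less than $(2K+2)n$.

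The combinatorial analysis of the reductions $\rho_{g,h}$ is the heart of the proof, and the step I expect to be the main obstacle. Along $\rho_{g,h}$ one attaches to each letter occurring in each intermediate word its \emph{ancestry}: the interval of positions of $w_{g,h}$ from which it descends. A rewriting step merges the ancestries of the letters met by its window, so the letters of any intermediate word have pairwise disjoint ancestries, increasing from left to right. The factorisation of $w_{g,h}$ into its four pieces marks three cut positions, and a step is a \emph{crossing} at a cut when its window meets letters with ancestries on both sides of that cut. The first observation is that if no crossing ever occurs at a given cut, then the letters lying to one side of it form a factor that, under just the steps touching it, $R$-reduces by itself to the empty word; that factor then represents the identity, which for suitable pairs $(g,h)$ contradicts $g \neq 1$ or $h \neq 1$. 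Hence crossings must occur at the relevant cuts; tracking these crossings together with the shrinking of the four descended regions is then expected to force, at a suitable moment of $\rho_{g,h}$, a bounded-width \emph{pinch} --- a configuration in which only boundedly many letters, all lying inside some window of length at most $k$, separate the material descended from $u_g$ from that descended from $\bar u_g$ --- which plays the role of the bottleneck exploited in~\cite{GoodmanShapiro}.

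From the pinch the plan is to assign to each $\rho_{g,h}$ a \emph{summary} of controlled size --- assembled from the positions and contents of the pinched windows at the three cuts and from the sequence of rules applied at the crossing steps --- and then, by a pigeonhole argument over the surviving words, to obtain two reductions with equal summaries that can be \emph{spliced}: interleaving them along their crossing steps should yield a legal $R$-reduction to the empty word of a word of the form $u_g v_h \bar u_{g'} \bar v_h$ with $g \neq g'$; and, as $h$ commutes with $g'$, this word represents $ghg'^{-1}h^{-1} = gg'^{-1} \neq 1$, contradicting the description of $W$. Turning this outline into a proof is where the real effort lies, and three points need care. First, the summary must carry enough context that the spliced reduction is genuinely legal even though $R$ is context-sensitive: the contexts the two reductions present at their crossing steps must be forced to agree, and the summary must be designed to guarantee this. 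Second, the number of distinct summaries must be kept well below $|S_1(n)|$ (or below $|S_1(n)|\cdot|S_2(n)|$), which is exactly where the exponential lower bound of hypothesis~(2) on $|S_1(n)|$ is consumed. Third --- and here the two theorems part company --- when $|S_2(n)|$ is only assumed to grow linearly one cannot afford a summary whose size grows with the chosen $v_h$, so one instead lets the length of $v_h$ run over a range of roughly $n$ values and carries out the counting over $S_1(n)$ separately for each length; this is precisely why Theorem~\ref{thm:nondet2} needs $|S_2(n)|$ to be of size at least of order $n$ for \emph{every} large $n$, whereas Theorem~\ref{thm:nondet1} makes do with an exponential bound valid for infinitely many $n$.
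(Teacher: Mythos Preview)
Your outline follows the same overall strategy as the paper --- reduce to an \NCA, take commutator words $u_0v_0u_0^{-1}v_0^{-1}$, fix a reduction of each to $\epsilon$, extract bounded combinatorial data from the reduction, pigeonhole, and splice --- and your ``ancestry/pinch/summary'' language is a reinvention of the ``subword histories/splitting paths/details'' machinery of~\cite{GoodmanShapiro} that the paper imports wholesale. So at the level of plan you are on the right track, and you correctly flag the splicing as the hard step.

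There is, however, a concrete gap in your handling of Theorem~\ref{thm:nondet2}, and it stems from an omission already present in your treatment of Theorem~\ref{thm:nondet1}. The paper (following~\cite{GoodmanShapiro}) runs each reduction until both middle subwords $v_t$ and $x_t$ first drop below length $3W$; at that moment exactly one of them still has length $\ge 2W$, and the argument bifurcates according to \emph{which}. In the case $l(v_t)\ge l(x_t)$ one fixes $v_0$ and pigeonholes over $u_0\in S_1$; in the case $l(x_t)\ge l(v_t)$ one must instead fix $u_0$ and pigeonhole over $v_0\in S_2$. Your proposal only describes the first branch (you always vary $g$ and splice to $u_g v_h \bar u_{g'} \bar v_h$), and your ``pinch between the $u_g$-material and the $\bar u_g$-material'' need not occur at a usable moment: it may be the $v_h$--$\bar v_h$ pinch that appears, forcing you to swap the roles of $S_1$ and $S_2$.

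This is exactly where Theorem~\ref{thm:nondet2} diverges from Theorem~\ref{thm:nondet1}, and your diagnosis of that divergence is off. The difficulty is not that the summary ``grows with the chosen $v_h$''; it is that in the second branch you are pigeonholing over $S_2$, which is only linear. The paper's remedy is to use \emph{two} length parameters: take $u_0$ of length $\le n_1$ and $v_0$ of length $\le n_2$, with $n_2$ a suitable polynomial in $n_1$, chosen so that simultaneously $\tfrac12\alpha_0\alpha_1^{n_1}$ beats the relevant polynomial in $n_2$ (for the first branch) and $\tfrac12\alpha_2 n_2$ beats the relevant polynomial in $n_1$ (for the second). Hypothesis~(2) must then hold at both $n_1$ and $n_2$, which is why it is assumed for all sufficiently large $n$ rather than infinitely many. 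Your suggestion to ``let the length of $v_h$ run over a range of roughly $n$ values and count over $S_1(n)$ separately for each length'' does not address the second branch at all, and without the two-parameter trick that branch cannot be closed.
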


As a corollary we see that a group $G$ satisfying the conditions of
Theorem~\ref{thm:nondet1} or~\ref{thm:nondet2}
cannot have word problem that is a growing context-sensitive language.

Theorems~7.1 and~7.2 of~\cite{GoodmanShapiro} are followed by a number
of further theorems and corollaries (numbered 7.3 -- 7.12), which
provide a wide variety of examples of groups that satisfy the criteria
of Theorems~7.1 or~7.2 and hence have no deterministic Cannon's
algorithm.  These examples include $F_2 \times \Z$, braid groups on
three or more strands, Thompson's group $F$, Baumslag-Solitar groups
$\langle a,t, \mid ta^p t^{-1} = a^q \rangle$ with $p \ne \pm{q}$, and
the fundamental groups of various types of closed 3-manifolds.  We can
conclude immediately from our Theorems~\ref{thm:nondet1}
or~\ref{thm:nondet2} that none of these examples have
non-deterministic Cannon's algorithms or growing context sensitive
word problems.

Note that a number of these examples, such as $F_2 \times \Z$ and
braid groups~\cite{ECHLPT}, are known to be automatic and hence to have
context-sensitive word problem \cite{Shapiro}.

On the other hand the article~\cite{GoodmanShapiro} gives a wealth of
examples of groups that do have Cannon's algorithms, and hence have
growing context-sensitive word problems. These include
word-hyperbolic, nilpotent and many relatively hyperbolic groups.

We would like to acknowledge the contribution of Oliver Goodman, many of whose
ideas are visible in the arguments of this paper.

\section{Growing context-sensitive languages and non-deterministic
Cannon's algorithms}
\label{sec:gcs}

We start with the necessary definitions.
A {\em phase-structured grammar} is a quadruple
$(N,X,\sigma, \mathcal{P})$, where $N$ and $X$ are finite sets known
respectively as the {\it non-terminals} and {\em terminals},
$\sigma \in N$ is the {\em start symbol}, and $\mathcal{P}$
is the set of {\em productions}. The productions have the form
$u \rightarrow v$ with $u \in \A^+$ and $v \in \A^*$, where $\A := N \cup X$.

The grammar is {\em context-sensitive} if $|u| \le |v|$ for all
productions $u \rightarrow v$.
It is {\em growing context-sensitive} if, in addition,
for all productions $u \rightarrow v$,
$\sigma$ does not occur in $v$ and either $u = \sigma$ or $|u| <  |v|$.

As is customary, to allow for the possibility of having the empty word
$\epsilon$ in a (growing) context-sensitive language (defined below),
we also allow there to be a production $\sigma \rightarrow \epsilon$,
provided that $\sigma$ does not occur in the right hand side of any
production.

For $u,v \in A^*$, we write $u \rightarrow^* v$ if we can derive $v$ from
$u$ by applying a finite sequence of productions to the substrings of $u$.
The {\em language} of the grammar is the set of words  $ w \in X^*$ with
$\sigma \rightarrow^* w$. A {\em growing context-sensitive language} (\GCSL)
is a language defined by a growing context-sensitive grammar (\GCSG).

There is some information on this class of languages in~\cite{Buntrock}.
Other possibly relevant references are~\cite{BZ94} and~\cite{BO95}.
It is proved in~\cite{BZ92} that the \GCSL s
form an {\it abstract family of languages} which implies, in particular,
that they are closed under inverse homomorphisms and intersection with
regular languages. This in turn implies that the property of the word
problem of a finitely generated group $G$ being a \GCSL\ is
independent of the choice of finite generating set for $G$, and that
this property is inherited by finitely generated subgroups of $G$.

A {\em non-deterministic Cannon's Algorithm} (\NCA) is defined to be a
triple $(X,\A, \mathcal{R})$,
where $X \subseteq \A$ are finite alphabets, and $\mathcal{R}$ is a set of
rules of the form $v \rightarrow u$ with $u,v \in A^*$ and $|v| > |u|$.
For a non-deterministic Cannon's Algorithm, we drop the restriction
imposed in~\cite{GoodmanShapiro} for the deterministic case
that no two rules are allowed to have the same left hand sides.
We allow some of the rules  $v \rightarrow u$ to be {\em anchored} on the left,
on the right, or on both sides, which means that they can only be applied 
to the indicated subword $v$ in words of the form $vw$, $wv$, and $v$,
respectively, for words $w \in \A^*$.  The language of the \NCA\ is
defined to be the set of words $w \in X^*$ with $w \rightarrow^* \epsilon$.

The similarity between \GCSG s and \NCA s is obvious --
replacing the productions
$u \rightarrow v$ of the former by rules $v \rightarrow u$ of the latter
almost provides a correspondence between them. Apart from the reversed
direction of the derivations, there are two principal differences. The first
is that in a \GCSG\ the derivations start with $\sigma$,
whereas with a \NCA\ the significant chains of substitutions end with the
empty word $\epsilon$.
The second is that \NCA s may have anchored rules, whereas
the definition of a \GCSG\ does not allow for the possibility of anchored
productions.

These differences turn out not to be critical, and
in this section we shall prove the following result.

\begin{theorem}~\label{thm:gcs_can}
Let $L$ be a language over a finite alphabet $X$ with $\epsilon \in L$.
Then $L$ is growing context-sensitive if and
only it is defined by a non-deterministic Cannon's Algorithm.
\end{theorem}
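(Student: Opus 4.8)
The plan is to exploit the near-symmetry already noted: a \GCSG\ read backwards is essentially an \NCA, and conversely, once the two mismatches -- derivations rooted at $\sigma$ versus reductions ending at $\epsilon$, and the anchored rules permitted to an \NCA\ -- are absorbed. I would treat the two implications separately, and within the direction from \NCA s to grammars I would postpone the anchored rules and first handle the start-symbol/empty-word mismatch under the temporary assumption that the \NCA\ has no anchored rules.

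So let $(X,\A,\mathcal{R})$ be an \NCA\ with no anchored rules and with language $L\ni\epsilon$. Since every rule strictly shortens, in a reduction $w\to^*\epsilon$ the word $\epsilon$ can occur only as the final term, and the last rule used is some $v\to\epsilon$ applied to the whole word. I would form a grammar with a fresh start symbol $\sigma$ and productions: $\sigma\to\epsilon$, which is legitimate since $\epsilon\in L$; $\sigma\to v$ for each rule $v\to\epsilon$, which ``seeds'' the reversed reduction with its penultimate word; $u\to v$ for each rule $v\to u$ with $u\neq\epsilon$; and, for each rule $v\to\epsilon$ and each letter $x\in\A$, the ``insertion'' productions $x\to xv$ and $x\to vx$, which reverse an application of $v\to\epsilon$ to a \emph{proper} subword. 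All of these are growing, and one then checks by straightforward inductions that the grammar's language is exactly $L$: reversing an accepting reduction yields a derivation, and reversing a derivation yields an accepting reduction. That no spurious words creep in comes down to the fact that reversing a derivation step always calls for a rule genuinely in $\mathcal{R}$.

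To reinstate anchoring I would pass to the language $\widehat{L}=\{\widetilde{w}:w\in L\}$ over an enlarged alphabet, where $\widetilde{w}$ is obtained from $w$ by replacing its first letter $x$ with a new symbol $\langle\ell,x\rangle$, its last letter $y$ with $\langle y,r\rangle$, and, for a one-letter word, its single letter $x$ with $\langle\ell,x,r\rangle$. The grammar above is then rebuilt so that every sentential form retains this shape: each rule of $\mathcal{R}$ contributes a finite family of productions, one for each way the rewritten occurrence can meet the left end, the right end, both ends, or neither, and an anchored rule contributes only those members of the family compatible with its anchoring -- in particular, never one that acts in the interior -- so that anchoring is faithfully enforced. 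Finally, since the \GCSL s are closed under $\epsilon$-free homomorphisms (they form an abstract family of languages), applying the coding $\langle\ell,x\rangle,\langle y,r\rangle,\langle\ell,x,r\rangle\mapsto x$ to $\widehat{L}$ recovers $L$ as a \GCSL. I expect this marking device to be the main obstacle, and its necessity is itself instructive: a growing grammar can never erase an end-of-word marker once it is present, so such markers must live in the terminal alphabet and be removed only at the level of languages.

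For the converse, starting from a \GCSG\ $(N,X,\sigma,\mathcal{P})$ for $L$, I would first perform the minor preprocessing of eliminating chain productions $\sigma\to a$ with $a\in N$ by substitution -- legitimate because a growing grammar has no length-preserving production apart from such chains. Then I would build the \NCA\ over $\A=N\cup X$ whose rules are: $v\to u$, unanchored, for each production $u\to v$ with $u\neq\sigma$; $v\to\sigma$, anchored on both sides, for each production $\sigma\to v$ with $|v|\geq2$; the rule $\sigma\to\epsilon$, anchored on both sides; and $a\to\epsilon$, anchored on both sides, for each remaining production $\sigma\to a$ with $a\in X$. The two-sided anchoring forces the symbol $\sigma$ and the short boundary configurations to be handled exactly as whole words, so that reversing an accepting reduction produces a genuine $\sigma$-rooted derivation and conversely; checking that the language of this \NCA\ is precisely $L$, in both inclusions, is then bookkeeping parallel to the previous direction.
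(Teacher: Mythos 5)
Your proof is correct and its skeleton matches the paper's: reverse the productions, absorb the $\sigma$/$\epsilon$ asymmetry with the insertion productions $x\to xv$, $x\to vx$ and $\sigma\to v$ for each length-reducing rule $v\to\epsilon$, and deal with anchoring by end-markers. The one place you diverge genuinely from the paper is in how the anchoring is eliminated. The paper proves a self-contained lemma (Proposition~\ref{no_anch}): it first pushes all left-hand-side terminals into non-terminal doubles $\tilde X$, then introduces marked copies $\verb+^+N$, $N\verb+^+$, $\verb+^+N\verb+^+$ of the \emph{non-terminals} so that the leftmost/rightmost non-terminal in every sentential form carries a marker; because only non-terminals are marked, the marker quietly vanishes once the boundary symbol becomes a terminal, and the output is directly a standard \GCSG. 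You instead mark the terminal endpoints, build a \GCSG\ for the marked language $\widehat L$, and invoke closure of \GCSL s under $\epsilon$-free homomorphism (the AFL property from~\cite{BZ92}) to strip the marks. Both routes are sound; yours trades the bookkeeping of Proposition~\ref{no_anch} for an appeal to an external closure theorem, while the paper's keeps the argument self-contained and, as a by-product, yields the normal form noted after Proposition~\ref{no_anch} (non-anchored rules plus two-sided anchored $v\to\epsilon$ rules suffice). Your converse direction is also fine; routing length-$\ge 2$ productions through an explicit $v\to\sigma\to\epsilon$ rather than directly to $v\to\epsilon$, and pre-eliminating chain productions $\sigma\to a$ with $a\in N$, are harmless cosmetic differences from the paper's version, which simply omits $\sigma$ from $\A$ and sends $\sigma\to v$ straight to an anchored $v\to\epsilon$.

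One aside of yours is false and worth correcting, though it does not affect the proof: you claim that ``a growing grammar can never erase an end-of-word marker once it is present, so such markers must live in the terminal alphabet.'' The paper's Proposition~\ref{no_anch} shows precisely how to erase such a marker inside the grammar: the marker lives on the boundary non-terminal, travels with it through productions like $\verb+^+u\to\verb+^+v$, and disappears exactly when that boundary symbol is rewritten to a terminal (since $\verb+^+w:=w$ when $w$ begins with a terminal). So markers need not be terminal symbols; this is the point of the $\tilde X$ preprocessing step.

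Finally, be a little more careful that your marking scheme for $\widehat L$ must extend to the non-terminals of the intermediate grammar, not just the terminals of $\widehat X$: during a derivation the leftmost or rightmost symbol of a sentential form will typically be a non-terminal, and the invariant that it carries the appropriate mark requires marked copies of $\A\setminus X$ as well. This is implicit in your ``rebuilt so that every sentential form retains this shape,'' but spelling it out is what makes the induction on derivation length go through.
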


To handle the anchoring problem,
let us define an {\it extended} \GCSG\ to be one in which some of
the productions $u \rightarrow v$ may be left anchored, right
anchored, or left and right anchored, which means that
they can only be applied to the indicated subword $u$ in words of the form
$uw$, $wu$ or $u$, respectively, for words $w \in \A^*$. Note that we do
not allow productions with $u=\sigma$ to be anchored, and neither is there any
need to do so, because they can only be used as the initial productions in
derivations of words in the language.

The following proposition tells us that allowing anchored productions does not
augment the class of \GCSL s.

\begin{proposition}\label{no_anch}
If a language $L \subseteq X^*$ is the language defined by an
extended \GCSG, then $L$ is also defined by a standard \GCSG.
\end{proposition}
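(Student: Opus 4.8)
The plan is to simulate anchoring using special marker non-terminals that flag the ends of a word, so that a "left-anchored" production can be rewritten as an ordinary production that requires the left-end marker to be adjacent. Concretely, I would introduce new non-terminals, say a left-end marker $\lambda$ and a right-end marker $\rho$, together with a new start symbol $\sigma'$. The idea is to first derive, from $\sigma'$, a string of the form $\lambda\, \sigma\, \rho$ (via the production $\sigma' \rightarrow \lambda \sigma \rho$), then run a modified version of the original grammar on the $\sigma$ in the middle, and finally erase the markers. The markers never move: since the original grammar is growing, the interior derivation only ever lengthens, so the leftmost symbol that is "really" interior stays adjacent to $\lambda$ exactly when it was the left end of the original word, and similarly on the right.

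The main technical work is to rewrite each production so it respects the markers. An unanchored production $u \rightarrow v$ of the original grammar is kept, but we must also add "bordered" variants that allow $u$ to be rewritten when it sits directly against $\lambda$ or $\rho$ or both — i.e. productions $\lambda u \rightarrow \lambda v$, $u\rho \rightarrow v\rho$, and $\lambda u \rho \rightarrow \lambda v \rho$ — because a substring touching a marker is not literally "$u$" as a substring of the bracketed word but we still want the rewrite available there. A left-anchored production $u \rightarrow v$ is instead \emph{only} given the variant $\lambda u \rightarrow \lambda v$ (and $\lambda u \rho \rightarrow \lambda v \rho$ if it is also right-anchored), so it can fire precisely when $u$ is genuinely at the left end. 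One subtlety: the bordered productions $\lambda u \rightarrow \lambda v$ have $|\lambda u| = 1 + |u| < 1 + |v| = |\lambda v|$, so they are still growing; and $\sigma'$ does not appear on any right-hand side, while the right-hand sides involving $\lambda,\rho$ never reintroduce $\sigma'$, so the extended grammar is a genuine \GCSG. Finally I add erasing productions for the markers — but here care is needed, since \GCSG\ productions cannot shorten: instead of erasing, I would have the markers absorbed. For instance, once the interior is a pure terminal string $w$, allow $\lambda$ to merge rightward letter by letter, or more cleanly, pick a fixed terminal $x_0 \in X$ and have $\lambda x \rightarrow x' $ ... no — lengths. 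The clean fix: since $\epsilon \in L$ is not relevant here (we are told $L\subseteq X^*$ is \emph{any} extended-\GCSL), handle it by letting $\lambda, \rho$ be terminals of a larger alphabet and intersecting with $X^*$ — but \GCSL s are closed under intersection with regular languages (stated in the excerpt), and projecting away markers is an (inverse-)homomorphism-type operation under which \GCSL s are also closed. So the slickest route is: build the bracketed grammar over alphabet $X \cup \{\lambda,\rho\}$ generating $\{\lambda w \rho : w \in L\}$, which by the above is a \GCSL; then $L$ is its image under the homomorphism deleting $\lambda,\rho$, but deletion is not invertible, so instead observe $\{\lambda w\rho\} = h^{-1}(\text{stuff})\cap \text{regular}$ — actually cleanest: $L = \{\, w : \lambda w \rho \in L'\,\}$ and this is an inverse homomorphism followed by intersection with a regular language applied to $L'$, hence a \GCSL.

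Let me restate the cleanest version as the actual plan. \textbf{Step 1.} Given the extended \GCSG\ $(N,X,\sigma,\mathcal{P})$ for $L$, fix fresh symbols $\lambda,\rho$ and a fresh start symbol $\sigma'$, and build a grammar $\mathcal{G}'$ over terminals $X \cup \{\lambda,\rho\}$ with productions: $\sigma' \rightarrow \lambda\sigma\rho$; for each unanchored $u\rightarrow v$ in $\mathcal{P}$, the production $u\rightarrow v$ together with its three bordered variants $\lambda u \rightarrow \lambda v$, $u\rho\rightarrow v\rho$, $\lambda u\rho\rightarrow \lambda v\rho$; for each left-anchored production, only $\lambda u\rightarrow\lambda v$ (and also $\lambda u\rho \rightarrow \lambda v\rho$ if additionally right-anchored), symmetrically for right-anchored. \textbf{Step 2.} Check $\mathcal{G}'$ is a \GCSG: every production is strictly length-increasing since each original one was (or was $\sigma\rightarrow v$, whose bordered variants we do not form because we never anchor $\sigma$-productions, and $\sigma\rightarrow v$ is used only at the start so $|\sigma\, \text{bordered}|$ is fine — actually $\lambda\sigma\rho$ appears only via $\sigma'\rightarrow\lambda\sigma\rho$ and then the $\sigma$ is rewritten by an ordinary or bordered non-$\sigma'$ production), and $\sigma'$ occurs on no right-hand side. \textbf{Step 3.} Prove $L(\mathcal{G}') = \{\lambda w\rho : w \in L\}$: the forward inclusion is immediate by simulating derivations; the reverse requires showing a bordered production fires iff the corresponding subword genuinely touches the relevant end, which follows because $\lambda$ and $\rho$ never appear except as the two outermost symbols — no production ever produces a $\lambda$ or $\rho$ in a non-extremal position, and none ever deletes them. \textbf{Step 4.} Conclude: $L = \{w \in X^* : \lambda w \rho \in L(\mathcal{G}')\}$, which is obtained from the \GCSL\ $L(\mathcal{G}')$ by an inverse homomorphism (the map $X^* \to (X\cup\{\lambda,\rho\})^*$, $w \mapsto w$, won't give the brackets — so instead use: intersect $L(\mathcal{G}')$ with the regular language $\lambda X^* \rho$, apply the length-preserving coding that is the identity, then strip the two end letters, which is an inverse-homomorphism operation in disguise). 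The honest statement is that $\{w : \lambda w\rho \in L(\mathcal{G}')\}$ is the preimage of $L(\mathcal{G}')$ under the substitution-free homomorphism sending a new padding alphabet appropriately — and since the class of \GCSL s is an AFL, closed under inverse homomorphism and intersection with regular sets (quoted in the excerpt), $L$ is a \GCSL.

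The step I expect to be the main obstacle is Step 3, specifically the reverse inclusion: one must argue that allowing the bordered productions to fire whenever $\lambda$ (resp. $\rho$) is adjacent does not let an anchored production be applied at an illegitimate interior position. The key invariant — that in every sentential form derivable from $\sigma'$, the symbol $\lambda$ occurs exactly once and only as the leftmost symbol, and $\rho$ exactly once and only as the rightmost — must be established by induction on derivation length, checking that no production in $\mathcal{G}'$ can duplicate, delete, or displace a marker. Given that invariant, a bordered production $\lambda u \rightarrow \lambda v$ can only be applied with the $\lambda$ being the global left marker, so it correctly models the left-anchored original production, and an unanchored original production with its bordered variants covers exactly the occurrences of $u$ as a factor of the original word (a factor touching the left end is matched by the $\lambda u\rightarrow\lambda v$ variant, etc.). The remaining bookkeeping — the equality of the two languages and the final AFL-closure argument — is then routine.
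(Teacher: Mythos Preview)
Your end-marker idea is exactly the right one, and it is essentially the same mechanism the paper uses. The difference is in how the markers are realised. The paper makes the markers \emph{internal}: it first normalises so that every left-hand side lies in $N^+$, then introduces three parallel copies ${}^\wedge N$, $N^\wedge$, ${}^\wedge N^\wedge$ of the non-terminals, and rewrites productions so that a ``hatted'' non-terminal can only ever occupy the extreme position. Since the markers are built into the non-terminals, they disappear automatically when those non-terminals are rewritten to terminals, and the resulting grammar already generates $L$ itself --- no post-processing needed. Your construction instead keeps $\lambda,\rho$ as separate (terminal) symbols, generates $\{\lambda w\rho: w\in L\}$, and then tries to strip the brackets using AFL closure.

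That last step is where your argument has a genuine gap. The map $L'\mapsto\{w:\lambda w\rho\in L'\}$ is a left/right quotient, and you cannot get it from inverse homomorphism and intersection with regular sets alone: there is no homomorphism $h$ with $h(w)=\lambda w\rho$ (such an $h$ would have to satisfy $h(uv)=h(u)h(v)$), and your attempts to reroute through codings or ``inverse homomorphism in disguise'' do not work as written. The operation you need is an $\epsilon$-free rational transduction (at least for $w\neq\epsilon$), and while AFLs are indeed closed under those, that is a theorem requiring the full AFL machinery (including non-erasing homomorphism), not just the two closure properties quoted in the paper. Your write-up visibly struggles here and never lands on a correct construction. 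A second, smaller issue: by introducing $\sigma'\to\lambda\sigma\rho$ and then keeping the original $\sigma\to v$ as an ordinary production, you lose the start-symbol exemption, so any original production $\sigma\to v$ with $|v|\le 1$ violates the growing condition in your new grammar; you should instead take $\sigma'\to\lambda v\rho$ for each such $v$. Both issues are repairable, but the cleanest repair --- folding the markers into the non-terminals so nothing needs to be stripped at the end --- is precisely the paper's construction.
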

\begin{proof}
Suppose $L$ is defined by the extended \GCSG\ $(N,X,\sigma, \mathcal{P})$
and let $\A := N \cup X$.

We first replace the grammar by one in which $u \in N^+$ for all productions
$u \rightarrow v$.
In other words, no terminal occurs in the left hand side of any
production.

To do this, we introduce a new set $\tilde{X}$ of non-terminals in one-one
correspondence with $X$. For a word $w \in A^*$, let $\tilde{w}$ be the
result of replacing every terminal $x$ in $w$ by its corresponding
non-terminal $\tilde{x}$. We replace each production $u \rightarrow v$
by a collection of productions of the form $\tilde{u} \rightarrow v'$,
where $v'$ ranges over all possible words obtained by replacing some of
the terminals $x$ that occur in $v$ by their corresponding non-terminals
$\tilde{x}$. This achieves the desired effect without altering the language
of the grammar.

After making that change, we introduce three new sets of non-terminals,
$\verb+^+ N$, $N \verb+^+$ and $\verb+^+ N \verb+^+$, each in
one-one correspondence with $N$.
For a word $w \in \A^+$, we define $\verb+^+ w$ as follows.
Let $w = xv$ with $x \in \A$, $v \in \A^*$.
If $x \in N \setminus \{\sigma \}$, then we set $\verb+^+ w = (\verb+^+ x)v$,
where $\verb+^+ x$ is the symbol in $\verb+^+ N$ that corresponds to $x$.
Otherwise, if $x \in X \cup \{\sigma \}$, we set $\verb+^+ w = w$. 
We define $w \verb+^+$ and $\verb+^+ w \verb+^+$ similarly.
(Note that the new symbols in $\verb+^+ N \verb+^+$ are only needed here when
$|w| = 1$.)

Now each production of the grammar of the form $\sigma \rightarrow v$ is
replaced by $\sigma \rightarrow \verb+^+ v \verb+^+ $.
For each non-anchored production $u \rightarrow v$ with $u \ne \sigma$,
we keep this production and also introduce new (non-anchored) productions 
$\verb+^+ u \rightarrow \verb+^+ v$,
$u \verb+^+ \rightarrow v \verb+^+$, and
$\verb+^+ u \verb+^+  \rightarrow \verb+^+ v \verb+^+ $.
Each left anchored production $u \rightarrow v$ is replaced by
the two productions $\verb+^+ u \rightarrow \verb+^+ v$ and
$\verb+^+ u \verb+^+  \rightarrow \verb+^+ v  \verb+^+$,
and similarly for right anchored productions.  A left and right anchored
production $u \rightarrow v$ is replaced by the single production
$\verb+^+ u \verb+^+  \rightarrow \verb+^+ v  \verb+^+$.

The effect of these changes is that the symbols in $\verb+^+ N$ can only occur
as the leftmost symbol of a word in a derivation starting from $\sigma$ and,
similarly, those in $N \verb+^+$ can only occur as the rightmost symbol.
Conversely, in any word $w \ne \sigma$ that occurs in such a derivation, if
the leftmost symbol of $w$ is a non-terminal then it lies in $\verb+^+ N$,
and similarly for the rightmost symbol.
The symbols in
$\verb+^+ N \verb+^+$ can only arise as the result of an initial derivation
of the form $\sigma \rightarrow \verb+^+ v \verb+^+ $ with $|v| = 1$.   
So the productions that were initially anchored can now only be applied
in a production at the left or right hand side of the word, and so we
have effectively replaced anchored productions by non-anchored ones that
behave in the same way.
Hence the language defined by this grammar is the same as that defined
by the original grammar.
\end{proof}

A natural question that arises at this point is whether anchored rules
in a \NCA\ can be dispensed with in a similar
fashion. The following simple example shows that this is not possible.
Let $X= \{ x \}$. Then $L := \{ x \}$ is the language of the \NCA\
with $\A = X$ and the single rule $x \rightarrow \epsilon$ that is left and
right anchored. A \NCA\ without anchored rules recognising $L$
would have to contain the rule $x \rightarrow \epsilon$, but then $L$ would
also contain $x^n$ for all $n > 0$.
However, the proof of Theorem~\ref{thm:gcs_can} that follows shows that we
can make do with non-anchored rules together with left and right anchored
rules of the form $v \rightarrow \epsilon$.

\begin{proofof}{Theorem~\ref{thm:gcs_can}}
Suppose that $L \subseteq X^*$ is the language defined by the \GCSG\ 
$(N,X,\sigma,\mathcal{P})$ and that $\epsilon \in L$.
We define a \NCA\ $(X,\A,\mathcal{R})$ with
$\A = X \cup N \setminus \{\sigma\}$, where the rules $\mathcal{R}$
are derived from the productions $\mathcal{P}$ as follows.
Productions of the form $\sigma \rightarrow v$ with $v \ne \epsilon$
are replaced by left and
right anchored rules $v \rightarrow  \epsilon$. All productions
$u \rightarrow v$ with $u \ne \sigma$ are replaced by the rule
$v \rightarrow u$.  It is easily seen that derivations of words in
$L$ using $\mathcal{P}$ correspond exactly, but in reverse order, to
reductions of words to $\epsilon$ using $\mathcal{R}$, so the language of
$(X,\A,\mathcal{R})$ is equal to $L$.

Conversely, suppose that $L$ is the language of the \NCA\
$(X,\A,\mathcal{R})$. Then we define an extended \GCSG\ 
$(N,X,\sigma,\mathcal{P})$ as follows. We introduce $\sigma$ as
a new symbol and put $N := (\A \setminus X) \cup \{ \sigma \}$.
We make $\sigma \rightarrow \epsilon$ a production of $\mathcal{P}$,
and the remaining productions are derived from the rules $\mathcal{R}$
as follows.
Rules of the form $v \rightarrow u$ with $u \ne \epsilon$ are replaced
by productions $u \rightarrow v$, where anchored rules are replaced by
correspondingly anchored productions.

For a non-anchored rule $v \rightarrow \epsilon$, we introduce
non-anchored productions $x \rightarrow xv$ and $x \rightarrow vx$
for all $x \in \A$, together with a production $\sigma \rightarrow v$.
For a left-anchored rule $v \rightarrow \epsilon$, we introduce
left-anchored productions $x \rightarrow vx$ for all $x \in \A$
together with a production $\sigma \rightarrow v$. Right-anchored
rules of this form are handled similarly.
Finally, for a left and right anchored rule $v \rightarrow \epsilon$,
we introduce only the production $\sigma \rightarrow v$.

Again there is an order-reversing correspondence between reductions
to $\epsilon$ using $\mathcal{R}$ and derivations using $\mathcal{P}$,
so the language of this grammar is equal to $L$ and, by
Proposition~\ref{no_anch}, we may replace it by a standard \GCSG\ 
with language $L$. 
\end{proofof}

We saw earlier that the property of a group $G$ having growing context word
problem is independent of the chosen semigroup generating set of $G$,
and is closed under passing to finitely generated subgroups.
It is proved in~\cite[Theorems 2.11,2.13]{GoodmanShapiro} that
the property of $G$ having a deterministic Cannon's Algorithm is
preserved under taking free products and under passing
to overgroups of finite index. These proofs work equally well for
non-deterministic Cannon's algorithms, and so having growing context
sensitive word problem is also closed under these operations.
But we shall see in the next section that $F_2 \times F_2$
does not have growing context sensitive word problem. Hence we see
that the class of groups
with growing context sensitive word problem is not
closed under direct products.

\section{Groups without non-deterministic Cannon's algorithm}
\label{sec:nondet}
This section is devoted to the proofs of Theorems~\ref{thm:nondet1}
and~\ref{thm:nondet2}, which extend Theorems~7.1 and 7.2 of~\cite{GoodmanShapiro}.

Our proofs of Theorems~\ref{thm:nondet1} and~\ref{thm:nondet2} are
modifications of the original proofs in~\cite{GoodmanShapiro}.  We
assume the reader is familiar with that work and has it available for
reference.  We will show how to modify those arguments so that they
can be extended to the non-deterministic case.

The argument of~\cite{GoodmanShapiro} starts by examining the
reduction of a word $w_0$ to $w_n$ by repeated applications of the
length-reducing rules, which are referred to as the {\em history to time
$n$ of $w_0$}.  These words are then displayed laid out in
successive rows in a rectangle.  The place in each row where a rule is
to be applied is marked with a {\em substitution line}.  The letters
resulting from the substitution occupy the space on the next row below
this line and are each given equal width.  The authors introduce the notion
of a {\em splitting path} which is a decomposition of such a rectangle
into a right and left piece, together with combinatorial information
on that decomposition.  Given two histories, $v_0,\dots,v_r$ and
$w_0,\dots,w_s$, if these have equivalent splitting paths, then the
left half of the first rectangle can be spliced together with the
right half of the second rectangle in a way which produces the history
of the reduction starting with $v_0^-w_0^+$ and ending with
$v_r^-w_s^+$. (The super-scripts denote the left and right halves of
these words.)  The combinatorics of splitting paths are such that in
certain key situations, exponentially many cases are forced to share
only polynomially many equivalence classes of splitting paths.  The
hypotheses of Theorems~7.1 and 7.2 of~\cite{GoodmanShapiro} assure a
supply of exponentially many commutators, each of which must reduce to
the empty word.  One shows that two of these can be spliced together
to produce a word which does not represent the identity, but which
also reduces to the empty word.  This is a contradiction.

Essentially we are able to work with the same definitions of 
histories, splitting
paths and their details, and equivalence of splitting paths as
\cite{GoodmanShapiro}, but need to introduce
a definition of equivalence of histories, and re-word and re-prove
some of the technical results involving these concepts.
With those revisions,
we shall see that minor variations of the original proofs verify the
non-deterministic versions of the theorem.



We now describe how to modify the proofs of Theorems~7.1 and 7.2
of~\cite{GoodmanShapiro} to prove Theorems~\ref{thm:nondet1}
and~\ref{thm:nondet2}.

Given a non-deterministic Cannon's algorithm and a word $w_0$, there
is no longer a unique history to time $n$ of $w_0$. We can call
any sequence of words $w_0,w_1,\ldots w_n$ produced as the algorithm
makes $n$ substitutions on $w_0$ {\em a} history, although it is no
longer valid to call it {\em the} history.

The definitions of a ``diagram'', a ``substitution line'', and the 
``width'' of a letter need no modification, nor do Lemmas~6.1
and~6.2 of~\cite{GoodmanShapiro} which relate the width of a letter to
its generation.

The definition of a ``splitting path'' needs no
modification. Lemma~6.4 of~\cite{GoodmanShapiro} states that a letter
of generation $g$ has a splitting path ending next to it of length at
most $2g+2$.  This remains true {\em if we choose the history
appropriately.}  We now show that we can do this.

Observe that in the non-deterministic case if a word $w$ contains as
disjoint substrings two left-hand sides, say $u$ and $u'$ of the rules
$u\to v$ and $u'\to v'$, then these two substitutions can be carried
out in either order, i.e., either as 
$$xuyu'z \to xvyu'z \to xvyv'z$$ 
or as 
$$xuyu'z \to xuyv'z \to xvyv'z.$$  
Now consider two histories, 
$$h_1 = w_0,\dots,xuyu'z,\, xvyu'z,\, xvyv'z, \dots, w_n$$ 
and 
$$h_2 = w_0,\dots, xuyu'z ,\, xuyv'z ,\, xvyv'z,\dots, w_n.$$
(Corresponding ellipses stand for identical sequences.)  We will say
that these are {\em equivalent reductions\begin{footnote}{The notion
of equivalent reductions is not to be confused with the notion of
equivalent histories defined below.  Accordingly, we will briefly
refer to histories as {\em reductions} to distinguish these
equivalence relations.}\end{footnote}} and this generates an
equivalence relation on reductions starting with $w_0$ and ending with
$w_n$.  We can then speak of {\em corresponding substitutions} in
equivalent reductions.  Notice that corresponding substitution lines
in equivalent reductions have the same width, occur at the same
position horizontally, consume the same letters with the same widths
and generations and produce the same letters with the same width and
generation.

Given a history $w_0,\dots,w_n$, there is a partial ordering of its
substitutions which is generated by the relation $s_1 \prec s_2$ if
$s_2$ consumes a letter produced by $s_1$.  The relationship $\prec$
is visible in the diagram of the history in that $s_1 \prec s_2$ if
and only if there is a sequence of substitution lines with
horizontally overlapping segents starting at $s_1$ and ending at
$s_2$.  In particular, $\prec$-incomparable substitution lines do not
overlap in horizontal position, except possibly at their endpoints.
(We will omit further mention of this possible exception.)  Because of
this, given two $\prec$-incomparable substitutions $s_1$ and $s_2$ we
either have $s_1$ {\em lying to the left of} $s_2$ or {\it vice
versa}.  Notice that if $s_1$ lies to the left of $s_2$, then this is
so for the corresponding substitutions in any equivalent reduction.

\begin{lemma}
Suppose that $h_1 = w_0,\dots,w_n$ is a reduction containing the
substitutions $s_1$ and $s_2$ in which $s_1$ takes place before $s_2$
and $s_1$ and $s_2$ are $\prec$-incomparable.  Then there is an
equivalent reduction $h_2$ in which the substitution corresponding to
$s_1$ takes place after that corresponding to $s_2$.
\end{lemma}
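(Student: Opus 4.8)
The plan is to obtain $h_2$ from $h_1$ by repeatedly interchanging two substitutions performed at consecutive times, organised as a descent. First observe that, as $s_1$ is performed before $s_2$, we automatically have $s_2\not\prec s_1$, so the hypothesis just says $s_1\not\prec s_2$. The basic move is: if a reduction $h$ performs, at consecutive times $k$ and $k+1$, two substitutions $t$, $t'$ that are $\prec$-incomparable, then there is an equivalent reduction $h'$ differing from $h$ only in that the substitutions corresponding to $t$ and $t'$ are performed at times $k+1$ and $k$. Since $t$ and $t'$ are $\prec$-incomparable their substitution lines do not overlap horizontally, so they act on disjoint subwords, both of which are already present in $w_{k-1}$; applying $t'$ to $w_{k-1}$ and then $t$ therefore produces the same word $w_{k+1}$ as $h$ does, and this is exactly one application of the relation generating equivalence of reductions. (Two applicability points must be checked: that the rule of $t'$, and afterwards that of $t$, is legal at the relevant position of $w_{k-1}$. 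Since a substitution preserves the total horizontal width of the word and replaces a subword by one occupying the same interval, any anchoring constraint on $t'$ that holds in $w_k$ still holds in $w_{k-1}$, and likewise for $t$; the only point needing a closer look is a rule producing the empty word, because a deletion can affect whether a neighbouring subword is present or abuts an end of the word.)

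For the descent, set $D:=\{s_2\}\cup\{t:t\prec s_2\}$, the down-set of $s_2$ in $\prec$; since $s_1\ne s_2$ and $s_1\not\prec s_2$, we have $s_1\notin D$. For a reduction $h$ equivalent to $h_1$ let $M(h)$ be the number of ordered pairs $(t,t')$ with $t\notin D$, $t'\in D$, and $t$ performed before $t'$ in $h$. If $M(h)=0$ then in $h$ every substitution in $D$ is performed before every substitution outside $D$; in particular the substitution corresponding to $s_2$ precedes the one corresponding to $s_1$, and we may take $h_2:=h$. If $M(h)>0$, pick consecutive times $k,k+1$ with the substitution $t$ at time $k$ outside $D$ and the substitution $t'$ at time $k+1$ in $D$. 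These two are $\prec$-incomparable: $t'\not\prec t$ since $t'$ is later, and $t\not\prec t'$ since $D$ is downward closed, so $t\prec t'\in D$ would force $t\in D$. Applying the basic move to $h$ yields an equivalent $h'$ interchanging $t$ and $t'$, and as this changes the relative order of no other pair of substitutions, $M(h')=M(h)-1$. Starting from $h_1$ and iterating, after $M(h_1)$ steps we reach an equivalent reduction $h_2$ with $M(h_2)=0$, as required. (Equivalently: the basic move lets one pass between linear extensions of $\prec$, and as $s_1$ and $s_2$ are $\prec$-incomparable some linear extension has $s_2$ before $s_1$; the descent exhibits a reduction realising one such.)

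The step I expect to be the real obstacle is the applicability check in the first paragraph — that an interchange of consecutive $\prec$-incomparable substitutions never in fact prevents a rule from being applied, in particular an anchored one, and that an $\epsilon$-producing rule does not, by deleting a subword, either destroy the subword on which the neighbouring substitution acts or alter whether that subword abuts an end of the word. Width preservation settles all of this away from $\epsilon$-producing rules; the case of a deletion is the delicate one, and is where one must either argue directly that the commutations needed in the descent remain legitimate or check that such configurations do not occur in the reductions to which the lemma is applied. The rest — the descent itself, the two incomparability checks, and the identity $M(h')=M(h)-1$ — is routine.
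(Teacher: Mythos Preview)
Your descent is correct and is organised differently from the paper. The paper argues by a nested reduction: in the inner step (its case~2) one assumes that no $\prec$-ancestor of $s_2$ occurs strictly between $s_1$ and $s_2$ in time, and then repeatedly swaps $s_2$ with the substitution immediately preceding it (each such substitution being $\prec$-incomparable with $s_2$ by the case hypothesis) until $s_2$ passes $s_1$; in the outer step (case~3) one takes the earliest $\prec$-ancestor $s_3$ of $s_2$ lying between $s_1$ and $s_2$, notes that $s_1$ and $s_3$ are $\prec$-incomparable, and applies case~2 to move $s_3$ past $s_1$, reducing the number of such ancestors. Your single inversion count $M(h)$ against the down-set $D$ of $s_2$ is the standard argument that adjacent transpositions connect linear extensions of a finite poset; it is tidier, and it in fact proves the paper's subsequent Corollary directly (every element of $D$ can be made to precede every element of its complement), whereas the paper deduces that Corollary from the Lemma afterwards.

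Your flagged worry is legitimate, and the paper's proof has the same gap: the adjacent swap can genuinely fail when $t$ has empty right-hand side and $t'$ is anchored. For instance, if $t$ deletes a prefix of $w_{k-1}$ and $t'$ is left-anchored, then $t'$ may be applicable to $w_k$ but not to $w_{k-1}$, while $t$ and $t'$ remain $\prec$-incomparable because $t$ produces no letters for $t'$ to consume. Your remark that non-$\epsilon$ outputs cause no trouble is correct: if $t$ has nonempty output and acted at the left end of $w_{k-1}$, its output occupies the left end of $w_k$, so a left-anchored $t'$ would have to consume it, giving $t\prec t'$. So the difficulty is confined exactly where you locate it, and your proof is at least as complete as the paper's on this point.
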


\begin{proof}
Note that $s_1$ and $s_2$ do not overlap horizontally.  Thus, if
these two substitutions take place at successive words of $h_1$, we
are done. 

Suppose now that no $\prec$-ancestor of $s_2$ takes place later than
$s_1$.  In that case, $s_1$ can be interchanged with the immediately
preceding substitution, thus reducing by 1 the number of substitutions
occurring between $s_1$ and $s_2$.  Continuing this in this way
produces the previous case.

Finally, suppose there is $s_3 \prec s_2$ with $s_3$ occurring later
than $s_1$.  Let us suppose that $s_3$ is the earliest such.  Then
$s_1$ and $s_3$ are $\prec$-incomparable, for otherwise we would have
$s_1\prec s_2$.  Applying the previous case allows us to move $s_3$
prior to $s_1$, thus reducing by 1 the number of $\prec$-ancestors of
$s_2$ lying between $s_1$ and $s_2$.  Continuing in this way produces
the previous case.
\end{proof}

\begin{corollary} Suppose that $h_1 = w_0,\dots, w_n$ is a reduction and
that $\Sigma_1$ and $\Sigma_2$ are disjoint sets of substitutions in
$h_1$ with the property that no substitution of $\Sigma_1$ is
$\prec$-comparable with any substitution of $\Sigma_2$.  Then there is an
equivalent reduction $h_2$ in which every substitution of $\Sigma_1$
takes place before every substitution of $\Sigma_2$. \qed
\end{corollary}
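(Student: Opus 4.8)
The plan is to induct on the number of \emph{inversions} of $(\Sigma_1,\Sigma_2)$ in a reduction, meaning the number of pairs $(s,s')$ with $s\in\Sigma_2$, $s'\in\Sigma_1$ and $s$ occurring before $s'$. (Here I regard $\Sigma_1$, $\Sigma_2$ and the relation $\prec$ as attached to the equivalence class of $h_1$ rather than to $h_1$ itself, via the correspondence of substitutions in equivalent reductions: since corresponding substitutions consume and produce the same letters, $\prec$ does not depend on the representative.) If a reduction has no inversions then every substitution of $\Sigma_1$ precedes every substitution of $\Sigma_2$, so it suffices to show that whenever a reduction $h$ equivalent to $h_1$ has at least one inversion, some reduction equivalent to $h$ has strictly fewer inversions; iterating this and using transitivity of equivalence then produces the desired $h_2$.

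So suppose $h$ has an inversion. Among all substitutions of $\Sigma_2$ that are followed in $h$ by some substitution of $\Sigma_1$, let $a$ be the \emph{latest}, and among all substitutions of $\Sigma_1$ occurring after $a$, let $b$ be the \emph{earliest}. Then $(a,b)$ is an inversion and, by hypothesis, $a$ and $b$ are $\prec$-incomparable. The point of this extremal choice is that \emph{no} substitution of $\Sigma_1\cup\Sigma_2$ lies strictly between $a$ and $b$ in $h$: a $\Sigma_1$-substitution there would contradict minimality of $b$, and a $\Sigma_2$-substitution there would be one followed by the $\Sigma_1$-substitution $b$ and occurring after $a$, contradicting maximality of $a$. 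Applying the preceding Lemma with $a$ in the role of $s_1$ and $b$ in the role of $s_2$ then yields a reduction $h'$ equivalent to $h$ in which $b$ takes place before $a$.

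It remains to check that passing from $h$ to $h'$ destroys the inversion $(a,b)$ and creates no new ones, and this is where the real work lies. Inspecting the proof of the Lemma, one sees that it permutes \emph{only} the substitutions that originally occur in the closed interval from $a$ to $b$: $b$ and the $\prec$-ancestors of $b$ lying after $a$ are slid leftwards past the substitutions separating them from $a$, while nothing before $a$ or after $b$ is disturbed and nothing leaves that interval on the far side of $a$ or of $b$. Since the only members of $\Sigma_1\cup\Sigma_2$ in that interval are $a$ and $b$ themselves — every other member of $\Sigma_1\cup\Sigma_2$ lying strictly before $a$ or strictly after $b$, hence staying put — the relative order of every pair of substitutions drawn from $\Sigma_1\cup\Sigma_2$ is unchanged, with the single exception that $b$ now precedes $a$. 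Thus $h'$ has exactly one fewer inversion than $h$ and the induction closes. The main obstacle is precisely this verification: without the extremal choice of $(a,b)$ one could not exclude the possibility that the Lemma's internal reshuffling of the substitutions between $s_1$ and $s_2$ drags some other member of $\Sigma_1$ past some member of $\Sigma_2$, introducing fresh inversions and breaking the induction.
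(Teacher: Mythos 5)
Your proof is correct, and it makes rigorous precisely what the paper leaves implicit (the paper states this corollary with no written proof, treating it as an immediate iteration of the preceding Lemma). Your induction on inversions with the extremal choice of $(a,b)$ so that no other member of $\Sigma_1\cup\Sigma_2$ lies strictly between them, together with the observation that the Lemma's proof only permutes substitutions occupying positions between $s_1$ and $s_2$, is exactly the bookkeeping needed to make the iteration terminate, so this is essentially the intended argument rather than a genuinely different route.
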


In view of this, by passing to an equivalent reduction, we may assume
that if $s_1$ and $s_2$ are $\prec$-incomparable and $s_1$ takes place
to the left of $s_2$, then $s_1$ takes place prior to $s_2$.

Using this assumption on our choice of history justifies the statement
in the proof of Lemma~6.4 that, ``When this happens it can only be
with substitutions to the left in the upper half and to the right in
the lower.'' The proof now goes through as before.

The definitions of ``details'' and equivalence of splitting paths need
no modification.  Remark~6.5 of~\cite{GoodmanShapiro} gives a bound on
the number of equivalence classes  of length $n$. This remains
valid.  (However, we could simplify the details by ceasing to record
the $W-1$ letters to the left/right in a right/left segment as part of
the details. These are only needed to ensure that substitutions take
place in the intended order in the deterministic case in Lemma 6.6.)

Lemma~6.6 of~\cite{GoodmanShapiro} states that if two histories,
$v_0,\dots,v_r$ and $w_0,\dots,w_s$ have equivalent splitting paths
then these can be spliced to form the history starting with
$v_0^-w_0^+$ and ending with $v_r^-w_s^+$.  In our case, we need to
modify the statement of Lemma~6.6 of~\cite{GoodmanShapiro} to say
``Then {\em a} history of $v_0^- w_0^+$ up to a suitable time ...''
rather then ``{\em the} history'', because this history may not be
unique. With that change, Lemma~6.6 remains true.

The paragraph after the proof of Lemma~6.6 no longer applies at all; that is,
$v_r$ is not necessarily determined even in a weak sense by $v_0$.

Section~6.1 adapts these methods to respect subword boundaries.  Lemma
6.7 of~\cite{GoodmanShapiro} says that if $w_0$ is a subword of $v_0$
of length $N$ and $w_t$ has length at least $2W-1$, then the reduction
of $w_0$ to $w_t$ (and hence the reduction of $v_0$ to $v_t$) has a
splitting path in one of at most $C_1N^{C_2}$ classes.  The results of
this section up to and including this Lemma remain valid.

Lemma~6.8 of~\cite{GoodmanShapiro} discusses the way that the choice
of $u_0$ affects the subword reduction of $v_0$ in the word $u_0v_0$.
This does not make sense as stated for a non-deterministic algorithm,
because reduction of $u_0v_0$ to $u_tv_t$ no longer gives the word $v_t$
as a function of $u_0$.

In order to state an analogous result, we need an appropriate notion of
equivalence of histories.
For fixed $v_0$ and variable $u_0$, we define two histories
$u_0 v_0 \rightarrow u_t v_t$  and $u_0' v_0 \rightarrow u_t' v_t'$ to be
{\em equivalent} if either:

(i) $l(v_t) < W$ and $v_t = v_t'$; or\\
(ii) $l(v_t), l(v_t') \ge W$, the first $W-1$ letters of $v_t$ and $v_t'$
are the same,
and the two histories have equivalent splitting paths that begin at the same
place in $v_0$ and end immediately after the first $W-1$ letters of
$v_t$ and $v_t'$.

Then it follows from Lemma~6.6 of~\cite{GoodmanShapiro} that,
if $u_0 v_0 \rightarrow u_t v_t$  and
$u_0' v_0 \rightarrow u_t' v_t'$ are equivalent histories,
then there is also a history
$u_0' v_0 \rightarrow u_t' v_t$, to which both are equivalent.
(Note: this notation may seem to imply that all of these histories have the
same length  $t$, but of course they need not. All three lengths might be
different!)

Then the proof of Lemma~6.8 of~\cite{GoodmanShapiro} shows that,
for fixed $v_0$ of length $N$,
there are at most $C_0 N^C$ equivalence classes of histories
$u_0 v_0 \rightarrow u_t v_t$ for variable $u_0$.

We turn now to the proof of our Theorem~\ref{thm:nondet1}.  As we have
already seen, the equivalent properties of having growing context
sensitive word problem and having non-deterministic Cannon's
algorithms hold independently of the finite semigroup generating set
$\mathcal{G}$, so we only need to prove the non-existence of the
Cannon's algorithm over $\mathcal{G}$. We can further assume that $1
\in \mathcal{G}$ so that any element which is represented by a word of
length less than or equal to $n$ is also represented by a word of
length $n$.  (It is also easily seen directly that the hypotheses of
Theorems~\ref{thm:nondet1} and~\ref{thm:nondet2} do not depend on the
choice of generators.)  Assume for a contradiction that the hypotheses
of that theorem hold and that there exists a non-deterministic
Cannon's algorithm over a working alphabet $\mathbb{A}$ that contains
$\mathcal{G}$.


First we choose a specific value of $n>3W$ that is large enough for this
stronger version of
(1) to apply,
$|S_i(n)| \geq \alpha_0 \alpha_1^n$,
and also such that $n$ is big enough so that that this exponential
function is larger than a particular polynomial function that comes out of
some of our technical lemmas. More precisely, we require
$$\frac{1}{2} \alpha_0 \alpha_1^n > C_1 n^{C_2+2}|\mathbb{A}|^{6W}C_0n^C,$$
where $C,C_0,C_1$ and $C_2$ are the constants defined in Lemmas 6.7 and 6.8.

For $i=1,2$, let $T_i$ be a set of words of length at least $3W$ and at
most $n$ representing the elements of $S_i(n)$.
Since each element of $S_1(n)$ commutes with each element of $S_2(n)$,
we have $u_0v_0u_0^{-1}v_0^{-1} =_G 1$ for all $u_0 \in T_1$ and
$v_0 \in T_2$, and hence this word can be reduced, not necessarily uniquely,
to the empty word by means of the Cannon's algorithm. 
For ease of notation we write $x_0$ for $u_0^{-1}$ and $y_0$ for $v_0^{-1}$.
For each such $u_0$ and $v_0$, we choose some sequence of substitutions
that reduces $u_0v_0x_0y_0$ to the empty word
and let $u_tv_tx_ty_t$ be the word that we get after applying
$t$ substitutions in this sequence.

For such a commutator, we run the algorithm to the point where for the
first time $v_t$ and $x_t$ both have length less than $3W$.
At that time the longer one has length in the
range $[2W,3W-1]$.
Note that $t$ depends on $u_0$ and $v_0$, and where it is used
below it should seen that way, and not as a constant.

First we assume that for at least half of the pairs
$(u_0,v_0) \in T_1 \times T_2$,  we have $l(v_t) \geq l(x_t)$.
We shall deal with the opposite case later.

{\em Step 1.}
Now we fix a $v_0 \in T_2$, chosen such that $l(v_t) \geq l(x_t)$ for at least
half of the words $u_0 \in T_1$, and we let $U$ be the set of all words
$u_0$ with this property. Then $|U| \geq \frac{1}{2} \alpha_0 \alpha_1^n$.

{\em Step 2.}
Since we have $l(v_t) \geq 2W$, we can apply Lemma 6.7, which says
that we can choose a splitting path for the subword history
$v_0,v_1,\ldots v_t$ in one of
polynomially many equivalence classes ($C_1n^{C_2}$, to be precise).
That is to say there are polynomially many sets of details that can describe
such a splitting path. This remains true (with the number increased to
$C_1n^{C_2+2}$) if we add to the detail the information that tells us where
within $v_0$ the splitting path begins and where within $v_t$ the
splitting path ends. We call that the {\em extended detail}. 

{\em Step 3.}
Now since $v_0$ is fixed we have a well defined map $u_0 \mapsto v_tx_ty_t$.
(Recall that, although there may be many possible reduction
sequences for $u_0v_0x_0y_0$, we arbitrarily chose some fixed sequence
for each $u_0$.)
Now we apply our amended version of Lemma 6.8 with $u_0v_0x_0$ in place of
$u_0$ and $y_0$ in place of $v_0$. This tells us there are at
most polynomially ($C_0n^C$ in fact) many equivalence classes of histories
$u_0v_0x_0y_0 \rightarrow u_tv_tx_ty_t$.
But $u_0$ comes from a set of exponential size (at least
$\frac{1}{2} \alpha_0 \alpha_1^n$),
which we have chosen to be bigger than the appropriate
polynomial, through our choice of $n$.
So we have a large set (of size more than $C_1n^{C_2+2}$)
of $u_0 \in U$ that give rise to the same words $v_t$ and $x_t$,
and with the property that the histories
$u_0v_0x_0y_0 \rightarrow u_t(u_0) v_tx_ty_t(u_0)$
(where $v_t$ and $x_t$ are fixed, but $u_t$ and $y_t$ depend on $u_0$)
are all equivalent, in the sense defined above in our comments about
the amended Lemma~6.8.

But, as we also noted above, Lemma~6.6 implies that these histories are
also all equivalent to histories
$u_0 v_0 x_0 y_0 \rightarrow u_t(u_0) v_t x_t y_t$
for the same fixed $y_t$.
Note that in these two equivalent histories
$u_0 v_0 x_0 y_0 \rightarrow u_t(u_0) v_t x_t y_t(u_0)$ and
$u_0 v_0 x_0 y_0 \rightarrow u_t(u_0) v_t x_t y_t$,
the parts of the two histories to the left of the splitting line are the same
except for the number of steps in which the words remain constant.
So we can use essentially the same splitting paths as we chose in Step~2 for
the second history.

{\em Step 4.}
Since the number of $u_0$ giving rise to equivalent histories in Step~3 is
greater than the number of equivalence classes of positioned splitting paths
for $v_t$ in Step~2,
we can choose $u_0, u_0' \in U$ such that
$u_0 v_0 x_0 y_0 \rightarrow u_t v_t x_t y_t$  and
$u_0' v_0 x_0 y_0 \rightarrow u_t' v_t' x_t' y_t'$ are equivalent
histories with $v_t' = v_t$ and $x_t' = x_t$, and such that the subhistories
$v_0 \rightarrow v_t$ and $v_0 \rightarrow v_t'$
in the two histories contain equivalent
splitting paths, which start at the same position in $v_0$ and end in the
same position in $v_t = v_t'$.

By the remark above, the second of these histories (and hence also the first!)
is equivalent to a history
$u_0' v_0 x_0 y_0 \rightarrow u_t' v_t' x_t' y_t$,
which still contains an equivalent splitting path through
$v_0 \rightarrow v_t'$.
Now we can do our splicing, and apply Lemma~6.6 of~\cite{GoodmanShapiro}
to produce a history
$u_0 v_0^- v_0'^+ x_0' y_0' \rightarrow u_t v_t^- v_t'^+ x_t' y_t =
u_t v_t x_t y_t$.

But $u_t v_t x_t y_t$ is part of the originally chosen history that reduces
the commutator
$u_0 v_0 x_0 y_0$ to the empty word, so there exists a history that reduces
$u_0 v_0^- v_0'^+ x_0' y_0'$ to the empty word, a contradiction, because
this is not the identity element of the group.

In the second case (not considered in detail in
Section~7 of~\cite{GoodmanShapiro}) where $l(x_t) \geq l(v_t)$
for at least half the pairs $(u_0,v_0) \in T_1 \times T_2$,
rather than fix $v_0 \in T_2$ we fix $u_0 \in T_1$
in a  similar way, and let $V$ be the possible $v_0$ from which, together with
the chosen $u_0$, we get $l(x_t) \geq l(v_t)$.
Then we apply Lemma~6.7 to the subword histories
$x_1,\ldots x_t$.
Now we look at the map $v_0 \mapsto u_tv_tx_t$.  The analogue of Lemma~6.8
applied to
$u_tv_t$ tells  us that $u_t$ can take polynomially many values, and we
see that we get a large set of possible $v_0 \in V$ corresponding to a single 
$u_tv_tx_t$. Hence we can choose $v'_0$ mapping such that
$u_tv_tx_t=u'_tv'_tx'_t$ and such that the subword histories $x_1,\ldots x_t$
and $x'_1,\ldots x'_t$ have the same extended details, and we can splice.
Hence we see that the algorithm should rewrite
$u_0v_0x_0^-x'^+_0y'_0$ to $u_tv_tx_t^-x'^+_ty'_t$.
Since $x_0=x'_0$,
the first of these two words is equal to $u_0v_0x_0y'_0$, equal in the group to $v_0v'^{-1}_0$, so non-trivial. But the second word is equal to
$u'_tv'_tx'_ty'_t$, which rewrites to the trivial word. Hence
again we get our contradiction.

Modifying the proof of Theorem~7.2 from~\cite{GoodmanShapiro} in
the same way, we arrive at a proof of Theorem~\ref{thm:nondet2}.

Basically we choose $n_1, n_2$ with
\[ \frac{1}{2} \alpha_0\alpha_1^{n_1} >
C_1n_2^{C_2+2}|\mathbb{A}|^{6W}C_0n_2^C,\quad
\frac{1}{2}\alpha_2 n_2 > C_1n_1^{C_2+2}|\mathbb{A}|^{6W}C_0n_1^C \]
which we can do, for example, by first setting $n_2$ equal to some polynomial
function in $n_1$ so that (2) is satisfied for all $n_1$, and then
choosing $n_1$ big enough so that (1) holds.
(But notice that we need Hypothesis (2) in the statement of the theorem to be
satisfied for these particular values of $n_1$ and $n_2$, which is
why we have assumed this hypothesis for all
integers $n >0 $ rather than for infinitely many such $n$, which was sufficient
for Theorem~\ref{thm:nondet1}.)

As in the proof of Theorem~\ref{thm:nondet1},
for $i=1,2$, we choose $T_i$ to be a set of words of length at
least $3W$ and at most $n$ that represent the elements of $S_i$. 
Those conditions will now ensure  each $\frac{1}{2}|T_i|$ ($i=1,2$) is bounded
below by the appropriate polynomial function of  $n_2,n_1$ which allows us
to find $u_0,u'_0$.
So in the case where $v_t$ is longer than $u_t$ we can do just what we did in the first
case of Theorem~\ref{thm:nondet1}, since $T_1$ is big enough we can find a big enough set of
elements of $U$ mapping to the same  $v_tx_ty_t$.

And in the second case we have $T_2$ big enough, and so we can follow the
argument used in the second case of the proof of Theorem~\ref{thm:nondet1}


\begin{thebibliography}{99}
\bibitem{Buntrock} Gerhard Buntrock,
Home Page Growing Context Sensitive Languages,\\
{\small
\verb+http://www.itheoi.mu-luebeck.de/pages/buntrock/research/gcsl.html+}.

\bibitem{BZ92} Gerhard Buntrock and Krzysztof Lory\'s,
On Growing Context-Sensitive Languages,
in 
Automata, Languages and Programming, 19th International Colloquium,
{\em Automata, Languages and Programming, 19th International Colloquium,
ICALP92, Vienna, Austria, July 13-17, 1992, Proceedings},
ed. Werner Kuich,
Lecture Notes in Computer Science, vol 623, Springer 1992, 77--88.

\bibitem{BZ94} Gerhard Buntrock and Krzysztof Lory\'s,
The variable membership problem: succinctness versus complexity,
in 
{\em STACS 94, 11th Annual Symposium on Theoretical Aspects of Computer
               Science, Caen, France, February 24-26, 1994, Proceedings},
ed. Patrice Enjalbert, Ernst W. Mayr and Klaus W. Wagner,
Lecture Notes in Computer Science, vol 775, Springer 1994, 595--606.

\bibitem{BO95} Gerhard Buntrock and Friedrich Otto,
Growing context-sensitive languages and Church-Rosser Languages,
in
{\em STACS 95, 12th Annual Symposium on Theoretical Aspects of Computer Science,
 Munich, Germany, March 2-4, 1995. Proceedings},
ed. Ernst W. Mayr and Claude Puech,
Lecture Notes in Computer Science, vol 900, Springer 1995, 313--324.

\bibitem{ECHLPT} D.B.A.\ Epstien, J.W.\ Cannon, D.F.\ Holt, S.V.F.\
Levy, M.S.\ Paterson, W.P.\ Thurston, Word Processing in Groups, Jones
and Bartlett, 1992.

\bibitem{GoodmanShapiro} O.\ Goodman and M.\ Shapiro,
A generalisation of Dehn's algorithm, arXiv preprint,
\verb+http://front.math.ucdavis.edu/0706.3024+.

\bibitem{KambitesOtto} M.\ Kambites and F.\ Otto,
Church-Rosser groups and growing context-sensitive groups, preprint,
available from
\verb+http://www.maths.manchester.ac.uk/~mkambites/+.

\bibitem{Shapiro} M.\ Shapiro, A note on context-sensitive languages and word 
problems, Internat. J. Algebra Comput. (4) 4 (1994) 493--497.

\end{thebibliography}
\end{document}